\begin{document}

\newtheorem{defi}{Def}
\newtheorem{theo}{Theorem}
\newtheorem{coro}{Corollary}
\newtheorem{prop}{Proposition}
\newtheorem{rem}{Rem:}
\newtheorem{la}{Lemma}
\newtheorem{exa}{Example}
\newtheorem{conj}{Conjecture}
\newtheorem{problem}{Problem}

\begin{center}
\Large{Distance Sequences to bound the Harary Index and other Wiener-type Indices 
of a Graph} \\[2mm]
\large{Peter Dankelmann \\ University of Johannesburg}
\end{center}

\normalsize

\begin{abstract}
In this paper we obtain bounds on a very general class of distance-based topological indices 
of graphs, which includes the Wiener index,  defined as the sum of the distances
between all pairs of vertices of the graph, and most generalisations of the Wiener index,
including the Harary index and the hyper-Wiener index. 

Our results imply several new bounds on well-studied topological indices, among those sharp 
lower bounds on the Harary index and sharp upper bounds on the hyper-Wiener index for
(i) graphs of given order and size (which resolves a problem in the monograph 
[The Harary index of a graph, Xu, Das, Trinajsti\'{c}, Springer (2015)],   
(ii) for $\kappa$-connected graphs, where $\kappa$ is even,  
(iii) for maximal outerplanar graphs and for Apollonian networks (a subclass of maximal
planar graphs), and (iv) for trees in which all vertices have odd degree. 
\end{abstract}

Keywords: generalised Wiener index, Harary index; hyper-Wiener index; Wiener index, 
multiplicative Wiener index; distance sequence.

\section{Introduction}

Many topological indices
used in chemical and pure graph theory are based on distances between vertices. 
The best-known among the distance-based topological indices is the Wiener index, defined as  
\[ W(G) = \sum_{ \{u,v\} \subseteq V(G)} d_G(u,v), \]
where $V(G)$ is the vertex set of $G$ and $d_G(u,v)$ denotes the usual shortest path distance 
between vertices $u$ and $v$ of $G$.
Two of the most important distance-based indices besides the Wiener index are the Harary index $H(G)$, 
defined as 
\[ H(G)  = \sum_{ \{u,v\} \subseteq V(G)} \frac{1}{d_G(u,v)}, \]
and the hyper-Wiener index $WW(G)$, defined as 
\[ WW(G) = \frac{1}{2} \sum_{ \{u,v\} \subseteq V(G)} \big(d_G(u,v)^2 + d_G(u,) \big). \]
Bounds on these indices in terms of graph properties or other graph invariants have 
been the subject of intense study in the literature. Since 
the extremal graphs for distance-based topological indices like the Wiener index, 
Harary index and hyper-Wiener index coincide in many cases, it is natural to explore 
ways to prove extremal results for these topological indices in a unified way. 
This approach lead to the study of generalised Wiener indices, (also called
$Q$-indices (see, for example, \cite{BruDosGraGut2011}), of the form
\[ W_f(G) =  \sum_{ \{u,v\} \subseteq V(G)} f(d_G(u,v)), \]
where $f$ is a real-valued function on the set of positive integers. For suitable choices
this definition includes many well-known distance-based indices, for example 
the Harary index, 
the variable Wiener index $W_{\lambda}$, where $\lambda \in \mathbb{R}-\{0\}$ defined by 
$W_{\lambda}(G) =  \sum_{ \{u,v\} \subseteq V(G)} (d_G(u,v))^{\lambda}$ see, for example, 
\cite{HriKnoSkr2019}), the hyper-Wiener index, 
the generalised hyper-Wiener index $WW_{\lambda}$, defined by $WW_{\lambda}(G) 
  =  \sum_{ \{u,v\} \subseteq V(G)} \frac{1}{2} (d_G(u,v)^{\lambda} + d_G(u,v)^{2\lambda})$, 
where $\lambda \in \mathbb{R}-\{0\}$ (see Tomescu, Jarmad and Kamil \cite{TomArsJam2015}),
and the Tratch, Stankevich, Zefirov index 
$\sum_{ \{u,v\} \subseteq V(G)} \frac{1}{3} 
     \big( d_G(u,v)+ \frac{1}{2} d_G(u,v)^2 + \frac{1}{6} d_G(u,v)^3 \big)$
(see \cite{TraStaZef1990, KleGut1999}).
While the multiplicative Wiener index, $\pi(G) = \prod_{ \{u,v\} \subseteq V(G)} d_G(u,v)$
cannot be expressed in the form $W_f(G)$ for any $f$, the logarithm
of $\pi(G)$ can. 

This unified approach has yielded numerous general bounds on $W_f$. (The following results
are stated for the case that $f$ is an increasing function, corresponding results hold 
if $f$ is decreasing.)
In \cite{LiuDas2018} the trees of given order that have the four largest and 
the four smallest values of $W_f$ were determined. 
Considering trees with a given degree sequence, it was shown in \cite{SchWagWan2012, WagWanZha2013} 
that the so-called greedy tree minimises $W_f$. 
In \cite{DanDos2021}, extremal trees minimising $W_f$ among all trees with a given 
eccentric sequence, i.e., the sequence of the eccentricities of the vertices, 
are obtained.  
Unicyclic graphs with extremal values of $W_f$ were considered in \cite{MarRod-arxiv}. 
Sufficient conditions for graphs in terms of $W_f$ to have vulnerability parameters 
(such as toughness, binding number, tenacity, integrity) at least a certain value 
were given in \cite{HuaLiu2024}. 
Graphs with given independence number or matching number that are extremal with respect to 
$W_f$ were considered in \cite{Cam2021}. 
In recent years, sufficient conditions in terms of $W_f$ that guarantee that graphs have
certain Hamiltonian properties have attracted attention, see 
\cite{AolLiuYuaYu2023, DenKuaWuHua2017, KuaHuaDen2016, ZhoWanLu2018, ZhoWanLu2019}.

Other approaches to a unified treatment of  topological indices on trees, that also includes some 
indices that were not distance-based, were taken in \cite{VukSed2018}, where a partial 
order on the set of trees of given order is defined, and results for indices that are
increasing or decreasing with respect to this partial order are obtained. This was further
applied in \cite{SonHuaWan2021}. 
In \cite{AndRazWag2021}, the authors presented very general conditions on topological indices 
under which the greedy tree is always extremal 
among trees with a given degree sequence. 

In this paper we take an entirely different approach to distance-based topological indices 
by utilising properties
of distance sequences. The {\em distance sequence} ${\cal D}(G)$ of a connected graph $G$ is the 
nondecreasing sequence of the distances between  all unordered pairs of distinct vertices of $G$. 
The distance sequence was first considered in \cite{CasDan2019}, where it was used to 
prove sharp bounds on the Wiener index of the strong product of graphs. 
We say that a topological index $I(G)$ is {\em distance-based} if it can be expressed in the form
$I(G) = g({\cal D}(G))$, where $g$ is a function defined on the set of all
finite sequences of positive integers. We say that $I$ is increasing (decreasing, nonincreasing,
nondecreasing) if the function $g$, when restricted to sequences of a fixed length, is 
increasing (decreasing, nonincreasing, nondecreasing) in every coordinate. This definition
includes as  special cases the generalised Wiener index $W_f$, defined as 
$\sum_{ \{u,v\} \subseteq V(G)} f(d_G(u,v))$, where $f$ is an increasing or 
decreasing real function (which in turn includes well-known topological indices such as 
the Harary index and the hyper-Wiener index), but also, for example, the diameter of a graph, 
defined as the largest of the distances between its vertices.  

Our approach yields several new bounds/extremal graphs for distance-based topological indices. 
We determine graphs that maximise (minimise) indreasing (decreasing) distance-based indices 
(i) among graphs of given order and size, 
(ii)among $\kappa$-connected graphs of given order, where $\kappa$ is even, 
(iii) for $k$-trees and maximal outerplanar graphs, and 
(iv) trees with all vertex degrees odd. 
In most cases, the bounds on the Harary index, the hyper-Wiener index or the multiplicative 
Wiener index implied by our results are new. The result under (i) above 
resolves an open problem from the monograph \cite{XuDasTri2015} on the Harary index.

This paper is organised as follows. In Section \ref{section:terminology}, we define the 
terminology and notation used in this paper. In Section \ref{section:distance sequences} 
we introduce the degree sequence of graphs and prove basic properties. 
In the following sections we obtain extremal graphs for increasing or decreasing 
distance-based topological indices for various graph classes:
graph of given order and size are considered in Section \ref{section:order-and-size},
graphs of given connectivity are considered in Section \ref{section:connectivity}, 
maximal $k$-degenerate graphs (which as special cases contain 
$k$-trees, maximal outerplanar graphs and Apollonian networks which are a subclass of maximal
planar graphs) are considered in Section \ref{section:outerplanar}. 
Section \ref{section:regular-trees} is on two classes of trees, regular trees and trees 
in which all vertices have odd degree.

\section{Terminology and notation}
\label{section:terminology}

If $G$ is a graph, then we denote its vertex set and edge set by $V(G)$ and $E(G)$. 
If $u,v$ are vertices of $G$, then we say that $u$ is a neighbour of $v$ if $uv$ is an
edge of $G$. The degree ${\rm deg}_G(v)$ of $v$ is the number of its neighbours. 

A graph is connected, if between any two of its vertices, $u$ and $v$ say, there is a 
$(u,v)$-path. The distance $d_G(u,v)$ between two vertices $u$ and $v$ in a connected 
graph is the minimum length of a $(u,v)$-path. If $u$ is a vertex of $G$, then
the eccentricity of $u$, denoted by ${\rm ecc}_G(u)$, is the distance from $u$ to a 
vertex farthest from $u$. 

If $S$ is a set of vertices of $G$, then $G-S$ denotes the graph obtained from $G$ by
deleting all vertices in $S$ and edges that are incident with a vertex in $S$.  
If $k\in \mathbb{N}$, we say that $G$ is $k$-connected if $G$ has more than $k$ vertices, and 
the graph $G-S$ is connected whenever $S$ is a subset of $V(G)$ with fewer than $k$ vertices. 

A graph $G$ is planar if it can be embedded in the plane such that no two edges intersect. 
A graph is outerplanar if it can be embedded in the plane such that no two edges 
intersect, and every vertex is on the boundary of the outer face. 
A graph $G$ is maximal planar (maximal outerplanar) if it is planar (outerplanar), 
but after adding any edge it no longer has this property. 

Let $G$ be a graph and $k\in \mathbb{N}$. By the $k$-th power $G^k$ of $G$ we mean the graph
on the same vertex set in which two vertices are adjacent if their distance is not more than $k$.

We denote the path, the cycle and the complete graph on $n$ vertices by $P_n$, $C_n$ and $K_n$, 
respectively.

\section{Distance sequences}
\label{section:distance sequences}

Recall that the {\em distance sequence} ${\cal D}(G)$ of a connected graph $G$ of order 
$n$ is the nondecreasing sequence of the distances between 
all unordered pairs of distinct vertices of $G$. The distance sequence was first
considered in \cite{CasDan2019}.

If $A=(a_1, a_2,\ldots, a_k)$ and $B=(b_1,b_2,\ldots,b_k)$ are nondecreasing sequences
of integers, then we write $A\leq B$ if $a_i\leq b_i$ for $i=1,2,\ldots,k$. 
We write $A<B$ if $A \leq B$ but $A\neq B$. 
The relation $\leq$ imposes a partial order on the distance sequences of connected
graphs of given order. 

If $I$ is an nondecreasing (nonincreasing) distance-based topological index, then clearly 
$I(G_1) \leq I(G_2)$ ($I(G_1) \geq I(G_2)$) 
whenever $G_1$ and $G_2$ are connected graphs of the same order 
with ${\cal D}(G_1) \leq {\cal D}(G_2)$. Hence we have the following proposition.

\begin{prop} \label{prop:application-of-distance-sequence}
Let ${\cal G}$ be a class of connected graphs, let $n \in \mathbb{N}$, and let 
${\cal G}_n$ be the set of all graphs of order $n$ in ${\cal G}$. \\
(a) Let $I$ be an nondecreasing distance-based topological index. If a graph 
$G_n \in {\cal G}_n$ satisfies ${\cal D}(G) \leq {\cal D}(G_n)$ for all $G \in {\cal G}_n$, 
then 
\[ I(G) \leq I(G_n) \quad \textrm{for all $G \in {\cal G}_n$}.   \] 
(b) Let $I$ be an increasing distance-based topological index. If a graph 
$G_n \in {\cal G}_n$ satisfies ${\cal D}(G) < {\cal D}(G_n)$ for all $G \in {\cal G}_n -\{G_n\}$, 
then 
\[ I(G) < I(G_n) \quad \textrm{for all $G \in {\cal G}_n - \{G_n\}$}.   \] 
Corresponding inequalities hold if $I$ is nonincreasing or decreasing.
\end{prop}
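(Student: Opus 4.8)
The plan is to reduce everything to the coordinate-wise monotonicity of the defining function $g$, exactly as in the observation preceding the statement. The first thing I would record is the key structural fact that every graph in $\mathcal{G}_n$ has order $n$, so its distance sequence has the same length $\binom{n}{2}$. This is precisely what makes the monotonicity hypotheses applicable: $I$ is assumed nondecreasing or increasing only for the restriction of $g$ to sequences of a \emph{fixed} length, and all the sequences $\mathcal{D}(G)$ with $G \in \mathcal{G}_n$ live in the same coordinate space $\mathbb{N}^{\binom{n}{2}}$, so they can be compared coordinate by coordinate and fed into the same restricted $g$.

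For part (a), I would fix an arbitrary $G \in \mathcal{G}_n$. By hypothesis $\mathcal{D}(G) \leq \mathcal{D}(G_n)$, meaning the two sequences have equal length and $\mathcal{D}(G)$ is coordinate-wise at most $\mathcal{D}(G_n)$. I would then pass from $\mathcal{D}(G)$ to $\mathcal{D}(G_n)$ by raising the coordinates one at a time; since $g$ is nondecreasing in each coordinate on sequences of this length, no step can decrease the value of $g$, which yields $I(G) = g(\mathcal{D}(G)) \leq g(\mathcal{D}(G_n)) = I(G_n)$. (Alternatively one simply invokes the observation stated immediately before the proposition.)

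Part (b) refines this with strictness. Fixing $G \in \mathcal{G}_n - \{G_n\}$, the hypothesis $\mathcal{D}(G) < \mathcal{D}(G_n)$ means $\mathcal{D}(G) \leq \mathcal{D}(G_n)$ with strict inequality in at least one coordinate. Running the same one-coordinate-at-a-time interpolation, every step is nondecreasing, and the single step that raises a strictly-smaller coordinate is strictly increasing, because $g$ is now assumed increasing in each coordinate. Concatenating the steps therefore gives the strict inequality $I(G) < I(G_n)$.

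Finally, the nonincreasing and decreasing cases follow by applying parts (a) and (b) to the index $-I$, whose defining function is $-g$: this function is nondecreasing (respectively increasing) in each coordinate precisely when $I$ is nonincreasing (respectively decreasing), so the reversed inequalities are immediate; equivalently one simply repeats the interpolation argument with all inequalities reversed. I do not anticipate any genuine obstacle in this proof. The only point that deserves care is the equal-length observation of the first paragraph, which is exactly why the result is formulated for the subclass $\mathcal{G}_n$ of graphs of a common order $n$ rather than for the whole class $\mathcal{G}$, where distance sequences of differing lengths would be incomparable under $\leq$.
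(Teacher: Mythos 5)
Your proof is correct and takes essentially the same route as the paper, which simply treats the proposition as an immediate consequence of the observation that $\mathcal{D}(G_1)\leq\mathcal{D}(G_2)$ for graphs of equal order forces $I(G_1)\leq I(G_2)$ by coordinate-wise monotonicity of $g$ on sequences of fixed length $\binom{n}{2}$. Your coordinate-at-a-time interpolation and the reduction of the nonincreasing/decreasing cases to $-I$ merely make explicit the details the paper leaves as ``clearly.''
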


If $v$ is a vertex of a connected graph $G$, then ${\cal D}_G(v)$ denotes the
nondecreasing sequence of the distances between $v$ and all other vertices of $G$. 
The following lemma is a slightly more general version of the well-known fact 
that $W(G) \leq W(G-v) + d_G(v)$ if $v$ is not a cut-vertex of $G$, where 
$d_G(v)$ is the sum of the distances between $v$ and all other vertices of $G$. 

If $A$ and $B$ are sequences of integers, then we denote the sequence obtained from
the concatenation of $A$ and $B$ by arranging its element in nondecreasing order by $A \odot B$. 

\begin{la} \label{la:sequence-of-G-minus-v}
Let $G$ be a connected graph, and $v$ a vertex of $G$ which is not a cut-vertex. 
Then
\begin{equation} \label{eq:sequence-of-G-minus-v} 
{\cal D}(G) \leq {\cal D}(G-v) \odot {\cal D}_G(v),
\end{equation}
with equality if and only if $d_{G-v}(u,w) \leq 2$ for all $u,w \in N_G(v)$. 
\end{la}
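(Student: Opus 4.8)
The plan is to compare the two sequences term by term after a suitable pairing of the underlying pairs of vertices. First I would note that since $v$ is not a cut-vertex, $G-v$ is connected, so ${\cal D}(G-v)$ is defined; and if $G$ has $n$ vertices, then both sides of (\ref{eq:sequence-of-G-minus-v}) consist of $\binom{n}{2}$ terms, since $\binom{n-1}{2}+(n-1)=\binom{n}{2}$. The unordered pairs of distinct vertices of $G$ split into the pairs $\{v,u\}$ with $u\in V(G-v)$, whose $G$-distances constitute exactly ${\cal D}_G(v)$, and the pairs $\{u,w\}\subseteq V(G-v)$. For the latter the crucial observation is that $G-v$ is a subgraph of $G$, so every $(u,w)$-path in $G-v$ is a $(u,w)$-path in $G$, whence $d_G(u,w)\le d_{G-v}(u,w)$ for all $u,w\in V(G-v)$.

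This yields a bijection between the multiset of distances underlying ${\cal D}(G)$ and that underlying ${\cal D}(G-v)\odot{\cal D}_G(v)$ under which every term of the former is at most the corresponding term of the latter (the terms coming from pairs through $v$ being matched to themselves). I would then invoke the elementary fact that if two finite multisets of reals admit such a term-by-term dominating bijection, then after sorting both into nondecreasing order the sorted sequences are ordered coordinatewise. The quickest route is via upper tail counts: for every threshold $t$ the dominating bijection injects the set of entries exceeding $t$ of the smaller multiset into the set of entries exceeding $t$ of the larger, so the $i$-th smallest entries satisfy $a_i\le b_i$ for all $i$. Applying this to our two multisets gives (\ref{eq:sequence-of-G-minus-v}).

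For the equality statement I would first reduce coordinatewise equality of the two sorted sequences to equality of the underlying multisets, and then, since the terms from pairs through $v$ already agree, to equality of the multisets $\{d_G(u,w)\}$ and $\{d_{G-v}(u,w)\}$ indexed by the pairs $\{u,w\}\subseteq V(G-v)$. Because equal multisets have equal sums, while the term-by-term inequality $d_G(u,w)\le d_{G-v}(u,w)$ shows the sum of the first cannot exceed that of the second and coincides with it only when every term agrees, equality of the sorted sequences is equivalent to $d_G(u,w)=d_{G-v}(u,w)$ for all $\{u,w\}\subseteq V(G-v)$; that is, deleting $v$ changes no distance.

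It then remains to show that this holds if and only if $d_{G-v}(u,w)\le 2$ for all $u,w\in N_G(v)$. The forward direction is immediate: any two neighbours $u,w$ of $v$ satisfy $d_G(u,w)\le 2$ via the path $u\,v\,w$, so $d_{G-v}(u,w)=d_G(u,w)\le 2$. The reverse direction is where the real work lies and is the step I expect to be the main obstacle. Assuming a pair $x,y\in V(G-v)$ with $d_G(x,y)<d_{G-v}(x,y)$, a shortest $(x,y)$-path $P$ in $G$ must pass through $v$, so $v$ is an interior vertex of $P$ with two distinct neighbours $u,w\in N_G(v)$ adjacent to it along $P$. Replacing the length-$2$ segment $u\,v\,w$ of $P$ by a shortest $(u,w)$-path in $G-v$, which by hypothesis has length at most $2$, produces an $(x,y)$-walk avoiding $v$ of length at most $d_G(x,y)$; hence $d_{G-v}(x,y)\le d_G(x,y)$, contradicting the choice of $x,y$. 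This local rerouting argument closes the equivalence and completes the proof.
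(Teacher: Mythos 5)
Your proof is correct and follows essentially the same route as the paper's: decompose the pairs of vertices into those containing $v$ (giving ${\cal D}_G(v)$) and those inside $V(G)-\{v\}$, use $d_G(u,w)\le d_{G-v}(u,w)$ to compare sorted sequences, and reduce the equality condition to preservation of distances between neighbours of $v$. The only difference is that you spell out the steps the paper dismisses as clear (the tail-count justification that a dominating bijection of multisets yields coordinatewise domination of the sorted sequences, the sum argument characterising equality, and the path-rerouting argument for the neighbour condition), which makes your version more self-contained but not a different proof.
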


\begin{proof}
Let ${\cal D}'$ be the nondecreasing sequence of the distances in $G$  
between all pairs of vertices in $V(G)-\{v\}$. Then
${\cal D}(G) = {\cal D}' \odot {\cal D}_G(v)$.  
Since $d_G(u,w) \leq d_{G-v}(u,w)$ for all $u,w \in V(G)-\{v\}$, we have 
${\cal D}' \leq {\cal D}(G-v)$. This proves \eqref{eq:sequence-of-G-minus-v}. 

Clearly, equality in \eqref{eq:sequence-of-G-minus-v} holds if an only if 
$d_G(u,w) = d_{G-v}(u,w)$ for all $u,w \in V(G)-\{v\}$, which in turn holds 
if and only if 
$d_G(u,w) = d_{G-v}(u,w)$ for all $u,w \in N_G(v)$. The latter holds if and only
$d_{G-v}(u,w) \leq 2$ for all $u,w \in N_G(v)$.
\end{proof}

\section{Graphs of given order and size}
\label{section:order-and-size}

In this section we consider graphs of given order and size. 
Sharp lower bounds on nondecreasing/increasing (or upper bounds on nonincreasing/decreasing) 
distance-based indices are easily derived from the fact that
a connected graph of order $n$ and size $m$ has $m$ pairs of vertices at distance $1$, 
and the remaining $\binom{n}{2}-m$ pairs have distance at least $2$. For example,
if $f$ is an increasing function then $W_f(G) \geq m f(1) + (\binom{n}{2}-m)f(2)$ for
every graph of order $n$ and size $m$, with equality if and only if ${\rm diam}(G) \leq 2$. 

Sharp upper bounds on nondecreasing/increasing (or lower bounds on noninccreasing/decreasing) 
distance-based indices are less straightforward to obtain. 
For the Wiener index, Solt\'{e}s \cite{Sol1991} proved that, 
among all connected graphs of given order $n$ and size $m$, the path-complete graph
$PK_{n,m}$ (defined below) is the unique graph maximising the Wiener index. 
No sharp upper bound on the hyper-Wiener index in terms of order and size is known, 
except for the special case $m=n$, i.e., for unicyclic graphs (see \cite{XinZhoQi2011}). 
We add that lower and upper bounds on the hyper-Wiener index in terms of order and 
size that also take into account eccentricities of vertices or diameter were given in 
in \cite{FenLiu2012} and \cite{AlhBagRah2017}. 
In their monograph on the Harary index \cite{XuDasTri2015}, Xu, Das and Trinajsti\'{c}  posed the 
problem of determining a sharp lower bound on the Harary index in terms of order and size. 
Theorem \ref{theo:order-size} below solves this problem not only for the Harary index, 
but for increasing distance-based topological indices. 

Following Solt\'{e}s \cite{Sol1991}, we define a path-complete graph as a graph obtained from the 
union of a path and a complete graph by joining one end of 
the path to one or more vertices of the complete graph. It is not hard to see that
for given $n,m \in \mathbb{N}$ with $n-1 \leq m < \binom{n}{2}$ there exists a unique
path-complete graph of order $n$ and size $m$. We denote this graph by $PK_{n,m}$.

Using Solt\'{e}s' approach, it was shown in \cite{CasDan2019}
that the distance sequence of $PK_{n,m}$ is maximal with respect to the partial order $\leq$ 
among the distance sequences of all connected graphs of order $n$ and size $m$.

\begin{la} \cite{CasDan2019} \label{la:path-complete-dominates-for-given-size}
Let $G$ be a connected graph of order $n$ and size $m$. Then
\[ {\cal D}(G) \leq {\cal D}(PK_{n,m}). \]
\end{la}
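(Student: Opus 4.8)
The plan is to prove the statement by induction on the order $n$, using Lemma~\ref{la:sequence-of-G-minus-v} as the peeling tool. I would dispose of the degenerate cases first (for $m=n-1$ we have $PK_{n,m}=P_n$, and when $m$ reaches $\binom{n}{2}$ the graph is $K_n$ and the claim is trivial), so assume $n-1\le m<\binom{n}{2}$. Given a connected $G$ of order $n$ and size $m$, choose a non-cut vertex $v$ (one always exists) and put $d={\rm deg}_G(v)$. Then $G-v$ is connected of order $n-1$ and size $m-d$; crucially, connectedness of $G-v$ forces $m-d\ge n-2$, so $PK_{n-1,m-d}$ is well-defined (with the convention that the maximum-size member of the family is $K_{n-1}$), and the induction hypothesis gives ${\cal D}(G-v)\le {\cal D}(PK_{n-1,m-d})$. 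Combining this with Lemma~\ref{la:sequence-of-G-minus-v} and the evident monotonicity of $\odot$ in its first argument, it suffices to establish the concatenation inequality
\[ {\cal D}(PK_{n-1,m-d})\odot {\cal D}_G(v)\le {\cal D}(PK_{n,m}). \]

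To control ${\cal D}_G(v)$ I would deliberately avoid bounding it in isolation. Fixing a neighbour $u_0$ of $v$, every other vertex $x$ satisfies $d_G(v,x)\le 1+d_{G-v}(u_0,x)$, so ${\cal D}_G(v)$ is dominated by the $d$ ones arising from $N_G(v)$ together with the entries of $1+{\cal D}_{G-v}(u_0)$. I would strengthen the induction hypothesis so that it also controls single-source distance sequences, namely that the distance sequence from a suitably chosen (maximum-eccentricity) vertex of $PK_{n-1,m-d}$ dominates ${\cal D}_{G-v}(u_0)$. The left-hand side above would then be dominated by a sequence assembled entirely from the explicit structure of path-complete graphs, and it would remain to compare two fully explicit sequences: the distance sequence of $PK_{n,m}$, read off from its clique-plus-path structure (clique pairs at distance $1$, path pairs at distance equal to their index difference, and path-to-clique distances of the form ``path offset'' $+1$ or $+2$), against the concatenation on the left.

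The main obstacle is exactly this concatenation inequality, and its subtlety is that ${\cal D}(G-v)$ and ${\cal D}_G(v)$ are coupled through the edge budget and cannot be bounded independently. The crude per-coordinate bound ${\cal D}_G(v)\le(1,\dots,1,2,3,\dots,n-d)$ (with $d$ ones) is simply not simultaneously achievable together with a large ${\cal D}(G-v)$: attaching one leaf to $K_3$ already shows that pairing the crude bound $(1,2,3)$ with ${\cal D}(K_3)=(1,1,1)$ would overshoot ${\cal D}(PK_{4,4})=(1,1,1,1,2,2)$, whereas the true distance sequence from the leaf, $(1,2,2)$, fits exactly. Respecting this coupling -- that a vertex can be far from the rest only when the rest is correspondingly path-like -- is where the real work lies, and it is precisely what the shifting bound $d_G(v,x)\le 1+d_{G-v}(u_0,x)$ is designed to encode.

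As a guiding reformulation that may make the final comparison more transparent, I note that ${\cal D}(G)\le {\cal D}(PK_{n,m})$ is equivalent to the statement that, for every threshold $t$, the number of vertex pairs of $G$ at distance at least $t$ is at most the corresponding number for $PK_{n,m}$; equivalently, that $PK_{n,m}$ minimises the number of edges of every power $G^{t-1}$ among connected graphs of the given order and size. I would use this dual view to organise the coordinatewise comparison threshold by threshold, since for each fixed $t$ the quantity to be bounded becomes a single count of short-distance pairs in a graph with a prescribed number of edges, which is more amenable to a direct extremal argument than the sequence comparison itself.
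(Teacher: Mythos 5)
Your plan reduces the lemma to the ``concatenation inequality'' ${\cal D}(PK_{n-1,m-d})\odot {\cal D}_G(v)\le {\cal D}(PK_{n,m})$ for an \emph{arbitrarily chosen} non-cut vertex $v$ of degree $d$, and that intermediate claim is false, so the reduction itself is where the proof breaks. Take $G$ to be the path $p_1p_2p_3p_4p_5$ together with a universal vertex $v$, so $n=6$, $m=9$, and $v$ is a non-cut vertex of degree $d=5$. Then $G-v=P_5=PK_{5,4}$, so your induction hypothesis holds with equality, and ${\cal D}_G(v)=(1^{(5)})$ exactly; yet the concatenation equals $(1^{(9)},2^{(3)},3^{(2)},4)$, which is not dominated by ${\cal D}(PK_{6,9})=(1^{(9)},2^{(4)},3^{(2)})$ (compare the $13$th and $15$th entries). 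The lemma is of course true for this $G$, since all its distances are at most $2$; the point is that peeling off \emph{this} non-cut vertex loses too much, while peeling off $p_1$ instead would work. So ``it suffices to establish the concatenation inequality'' is only correct for a judiciously chosen vertex, and identifying that vertex and exploiting its properties is the actual content of Solt\'{e}s-type arguments (the paper itself gives no proof here: the lemma is quoted from \cite{CasDan2019}, which the paper says follows Solt\'{e}s' vertex-removal approach, where this choice is the crux).

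There is a second, independent gap: even in situations where the concatenation inequality does hold, the mechanism you propose for proving it cannot succeed, because your two induction hypotheses (domination of ${\cal D}(G-v)$ by ${\cal D}(PK_{n-1,m-d})$, and domination of ${\cal D}_{G-v}(u_0)$ by the path-end sequence of $PK_{n-1,m-d}$) are never simultaneously tight, and invoking both overshoots the target. Your own illustrative example shows this: for $G=PK_{4,4}$ and $v$ a triangle vertex of degree $2$ not adjacent to the leaf, $G-v=P_3=PK_{3,2}$, the single-source hypothesis only gives ${\cal D}_{G-v}(u_0)\le(1,2)$, so your shifting bound yields ${\cal D}_G(v)\le(1,1,3)$ and an assembled bound $(1,1,1,1,2,3)\not\le(1,1,1,1,2,2)={\cal D}(PK_{4,4})$. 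The coupling you correctly diagnosed is thus not resolved by the shifting bound; it reappears one level down as a false explicit inequality. Finally, the threshold reformulation (that $PK_{n,m}$ minimises $|E(G^t)|$ for every $t$ among connected graphs of given order and size) is a correct restatement of the lemma, but you offer no extremal argument for any fixed $t$, so it does not close the gap either. In short, the proposal correctly locates the difficulty but leaves it unresolved, and both concrete mechanisms it suggests for overcoming it are refuted by small examples.
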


As a direct consequence of Lemma \ref{la:path-complete-dominates-for-given-size} and 
Proposition \ref{prop:application-of-distance-sequence} we obtain the following theorem. 

\begin{theo} \label{theo:order-size}
Let $G$ be a connected graph of order $n$ and size $m$. 
If $I$ is an nondecreasing distance-based topological index, then 
\[ I(G) \leq I(PK_{n,m}). \]
If $I$ is a nonincreasing distance-based topological index, then 
\[ I(G) \geq I(PK_{n,m}). \]
\end{theo}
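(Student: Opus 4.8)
The plan is to derive Theorem~\ref{theo:order-size} directly as a corollary of the two results the excerpt has already established, namely Lemma~\ref{la:path-complete-dominates-for-given-size} and Proposition~\ref{prop:application-of-distance-sequence}. The entire conceptual work has in fact been done: Lemma~\ref{la:path-complete-dominates-for-given-size} asserts that the path-complete graph $PK_{n,m}$ has the maximal distance sequence, in the partial order $\leq$, among all connected graphs of order $n$ and size $m$, and Proposition~\ref{prop:application-of-distance-sequence}(a) translates any such distance-sequence domination into a corresponding inequality for a nondecreasing distance-based index. So the proof is a matter of specialising the abstract class ${\cal G}$ to the right family of graphs and citing the two results in sequence.

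Concretely, I would first fix $n$ and $m$ and let ${\cal G}$ be the class of all connected graphs, so that ${\cal G}_n$ restricted to size $m$ is exactly the family of connected graphs of order $n$ and size $m$; the graph $G_n$ playing the distinguished role in Proposition~\ref{prop:application-of-distance-sequence} is taken to be $PK_{n,m}$. The hypothesis ${\cal D}(G) \leq {\cal D}(PK_{n,m})$ for every $G$ in this family is precisely the content of Lemma~\ref{la:path-complete-dominates-for-given-size}, and one must note that all these graphs have the same order $n$ and the same number of pairs $\binom{n}{2}$, so their distance sequences have the same length and the partial order $\leq$ from Section~\ref{section:distance sequences} applies verbatim. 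Once this hypothesis is in place, Proposition~\ref{prop:application-of-distance-sequence}(a) yields $I(G) \leq I(PK_{n,m})$ for every nondecreasing distance-based index $I$, which is the first claimed inequality.

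For the second inequality I would simply invoke the remark at the end of Proposition~\ref{prop:application-of-distance-sequence} that corresponding inequalities hold for nonincreasing indices: reversing a coordinatewise $\leq$ between two sequences of equal length reverses the direction of the index inequality when $g$ is nonincreasing in each coordinate. Applying this to $I$ nonincreasing and to the same domination ${\cal D}(G) \leq {\cal D}(PK_{n,m})$ gives $I(G) \geq I(PK_{n,m})$, completing the proof.

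There is no genuine obstacle here, since both ingredients are stated as available results; the only point requiring a moment's care is the bookkeeping observation that distance-sequence comparison is only defined between graphs of the same order (equivalently, sequences of equal length $\binom{n}{2}$), which is automatic because every graph in the relevant family shares the order $n$. I would therefore expect the write-up to be a one- or two-sentence deduction. The phrase \emph{``As a direct consequence''} preceding the theorem statement signals exactly this, and the honest thing is to present it as the short corollary it is rather than to manufacture additional machinery.
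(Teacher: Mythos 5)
Your proposal is correct and coincides exactly with the paper's own argument: the paper derives Theorem~\ref{theo:order-size} as a direct consequence of Lemma~\ref{la:path-complete-dominates-for-given-size} and Proposition~\ref{prop:application-of-distance-sequence}, just as you do. The only cosmetic difference is that you spell out the bookkeeping (choice of the class ${\cal G}$, equal sequence lengths) that the paper leaves implicit.
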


Theorem \ref{theo:order-size} implies, for example, that among connected graphs of given order 
and size, the path-complete graph has minimum Harary index and maximum hyper-Wiener index as well
as maximum multiplicative Wiener index.

\section{Graphs of given connectivity}
\label{section:connectivity}

In this section we consider graphs of given connectivity $\kappa$. 
While sharp lower bounds for nondecreasing/increasing (or sharp upper bounds for nonincreasing/decreasing) distance-based topological indices of given order and
connectivity are known for many distance-based indices, (see \cite{BehJanTae2009} for the 
hyper-Wiener index, \cite{TomArsJam2015} for the general hyper-Wiener index, and 
\cite{FenZhoLiu2017} for upper bounds on the Harary index), 
a sharp upper bound is currently known only for Wiener index. For even $\kappa$, 
Favaron et al.\  \cite{FavKouMah1989} gave a sharp upper bound, while for
odd $\kappa$, the maximum Wiener index was determined asymptotically in
\cite{DanMukSwa2009}. So far, no upper bounds for hyper-Wiener index or lower bounds on Harary
index for graphs of connectivity greater than $1$ appear to be known. 
Below we give an upper bound on nondecreasing distance-based topological indices for 
graphs of order $n$ and connectivity $\kappa$, where $\kappa$ is even. 

In \cite{CasDan2019} it was shown that, for even $\kappa$, the 
distance sequence of the $\frac{\kappa}{2}$-th power of the cycle $C_n$, 
is maximal among the distance sequences of all $\kappa$-connected graphs of order $n$. 

\begin{la} \cite{CasDan2019} \label{la:cycle-power-dominates-for-given-connectivity}
Let $\kappa\in \mathbb{N}$ be even. If $G$ is a $\kappa$-connected graph of order $n$, then
\[ {\cal D}(G) \leq {\cal D}(C_n^{\kappa/2}). \] 
\end{la}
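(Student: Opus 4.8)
The plan is to translate the coordinatewise order on distance sequences into a statement about how fast balls grow, and then to show that $\kappa$-connectivity forces balls to grow at least as fast as in $C_n^{\kappa/2}$. Write $t = \kappa/2$, and for a vertex $v$ and an integer $s \geq 0$ let $B_s(v)$ denote the set of vertices at distance at most $s$ from $v$. For two nondecreasing sequences of the same length, $A \leq B$ holds if and only if, for every threshold $s$, the sequence $A$ has at least as many entries not exceeding $s$ as $B$ does. Applied to ${\cal D}(G)$ and ${\cal D}(C_n^t)$, whose common length is $\binom{n}{2}$, this reduces the lemma to proving that for every $s$ the graph $G$ has at least as many pairs of vertices at distance at most $s$ as $C_n^t$ does. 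Since the number of ordered pairs at distance at most $s$ equals $\sum_{v} |B_s(v)|$, it suffices to establish, for every $s \geq 0$, the inequality $\sum_{v \in V(G)} |B_s(v)| \geq \sum_{v \in V(C_n^t)} |B_s(v)|$.

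The heart of the argument, and the step I expect to be the main obstacle, is a pointwise lower bound on ball sizes: in any $\kappa$-connected graph I claim that $|B_s(v)| \geq \min(n, 1 + s\kappa)$ for every vertex $v$. I would prove this by showing that each sphere grows by at least $\kappa$ until the ball exhausts $V(G)$. Fix $v$ and suppose $B_s(v) \neq V(G)$. For each $j$ with $1 \leq j \leq s$ there is then a vertex at distance greater than $j$ from $v$, and every path from $B_{j-1}(v)$ to such a vertex must pass through a vertex at distance exactly $j$; hence the sphere $N_j(v) = B_j(v) \setminus B_{j-1}(v)$ is a separating set. By $\kappa$-connectivity $|N_j(v)| \geq \kappa$, and summing over $j$ gives $|B_s(v)| \geq 1 + s\kappa$. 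Together with the trivial bound in the remaining case this yields $|B_s(v)| \geq \min(n, 1 + s\kappa)$. The delicate points here are verifying that $N_j(v)$ genuinely separates the ball from the rest (which uses that adjacent vertices have distances from $v$ differing by at most one, so that neither $B_{j-1}(v)$ nor $V(G) \setminus B_j(v)$ is empty and no edge joins them in $G - N_j(v)$) and correctly handling the boundary case in which the ball already covers all of $V(G)$.

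Finally I would compute the ball sizes in $C_n^t$ exactly. By vertex-transitivity every vertex of $C_n^t$ has the same ball sizes, and since the distance between two vertices at cyclic distance $d$ is $\lceil d/t \rceil$, the ball of radius $s$ consists of all vertices within cyclic distance $st$, so that $|B_s(v)| = \min(n, 1 + 2st) = \min(n, 1 + s\kappa)$. Thus the pointwise bound for $G$ is met with equality throughout $C_n^t$, and summing over the $n$ vertices gives $\sum_{v \in V(G)} |B_s(v)| \geq n \cdot \min(n, 1 + s\kappa) = \sum_{v \in V(C_n^t)} |B_s(v)|$ for every $s$, which is exactly what is needed. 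It is precisely here that the hypothesis that $\kappa$ is even enters: it guarantees that $t = \kappa/2$ is an integer and that the $\kappa$-connected graph $C_n^t$ realizes the extremal, slowest possible ball growth with equality, so that no slack is lost in the comparison.
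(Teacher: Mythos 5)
Your proof is correct, but there is nothing in the paper itself to compare it against: the paper states this lemma as a quoted result from \cite{CasDan2019} and gives no proof of it. Your argument is a sound, self-contained reconstruction along the lines of the classical technique used in that reference (and going back to Favaron, Kouider and Mah\'{e}o \cite{FavKouMah1989}): you correctly observe that for nondecreasing sequences of equal length, ${\cal D}(G) \leq {\cal D}(C_n^{\kappa/2})$ is equivalent to $G$ having, for every threshold $s$, at least as many vertex pairs at distance at most $s$ as $C_n^{\kappa/2}$; you establish the key estimate $|B_s(v)| \geq \min(n, 1+s\kappa)$ via the sphere-separator argument (each sphere $N_j(v)$ with $1 \leq j \leq s$ separates $B_{j-1}(v)$ from the nonempty set $V(G) \setminus B_j(v)$, hence has at least $\kappa$ vertices by $\kappa$-connectivity); and you compute $|B_s(v)| = \min(n, 1+s\kappa)$ exactly in $C_n^{\kappa/2}$ using $d_{C_n^t}(u,w) = \lceil d_{C_n}(u,w)/t \rceil$. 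All three steps check out, including the threshold-counting equivalence (whose nontrivial direction follows by taking $s = b_i$ and using monotonicity of the sequences) and the treatment of the case $B_s(v) = V(G)$. Two cosmetic points only: your sum $\sum_v |B_s(v)|$ counts the $n$ diagonal pairs $(v,v)$, which is harmless since both graphs have the same order, so these cancel in the comparison; and your remark that $C_n^{\kappa/2}$ is itself $\kappa$-connected, while true, is never actually needed---only its ball sizes enter the argument. What your write-up adds relative to the paper is precisely a proof where the paper offers only a citation; for attribution, of course, the result itself belongs to \cite{CasDan2019}.
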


For the special case $\kappa=2$, i.e., for $2$-connected graphs, it is easy to see 
that equality in Lemma \ref{la:cycle-power-dominates-for-given-connectivity} 
implies that $G$ is $2$-regular, so for $\kappa=2$ equality holds if and only if $G=C_n$.

As a direct consequence of Lemma \ref{la:cycle-power-dominates-for-given-connectivity} and 
Proposition \ref{prop:application-of-distance-sequence} we obtain the following theorem.

\begin{theo} \label{theo:connectivity}
Let $G$ be a $\kappa$-connected graph of order $n$, where $\kappa$ is even. \\
(a) If $I$ is an nondecreasing distance-based topological index, then 
\[ I(G) \leq I(C_n^{\kappa/2}). \]
(b) If $\kappa=2$ and $I$ is a increasing distance-based topological index, then 
\[ I(G) \geq I(C_n^{\kappa/2}), \]
with equality if and only if $G$ is a cycle.  \\
(c) Corresponding statements hold if $G$ is nonincreasing or decreasing. 
\end{theo}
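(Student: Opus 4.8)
The plan is to derive all three parts directly from Lemma \ref{la:cycle-power-dominates-for-given-connectivity} together with Proposition \ref{prop:application-of-distance-sequence}, taking ${\cal G}$ to be the class of all $\kappa$-connected graphs and ${\cal G}_n$ its members of order $n$. The one preliminary I would record is that $C_n^{\kappa/2}$ genuinely lies in ${\cal G}_n$: it has order $n$, it is $\kappa$-regular, and it is $\kappa$-connected, so it is a legitimate candidate for the extremal graph $G_n$ in Proposition \ref{prop:application-of-distance-sequence}. This is in fact implicit in Lemma \ref{la:cycle-power-dominates-for-given-connectivity} already, since the lemma names $C_n^{\kappa/2}$ as the dominating member of the class.

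For part (a), Lemma \ref{la:cycle-power-dominates-for-given-connectivity} gives ${\cal D}(G) \leq {\cal D}(C_n^{\kappa/2})$ for every $G \in {\cal G}_n$. Setting $G_n = C_n^{\kappa/2}$, the hypothesis of Proposition \ref{prop:application-of-distance-sequence}(a) is met verbatim, so for a nondecreasing index $I$ one obtains $I(G) \leq I(C_n^{\kappa/2})$ for all $G \in {\cal G}_n$. Nothing further is required.

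For part (b), I would specialise to $\kappa = 2$, where $C_n^{\kappa/2} = C_n$, and invoke the equality discussion recorded immediately before the theorem: equality in Lemma \ref{la:cycle-power-dominates-for-given-connectivity} forces $G$ to be $2$-regular, and a $2$-regular $2$-connected graph is a cycle, so ${\cal D}(G) < {\cal D}(C_n)$ for every $G \in {\cal G}_n - \{C_n\}$. This is precisely the strict-domination hypothesis of Proposition \ref{prop:application-of-distance-sequence}(b); applied with an increasing index it upgrades the part-(a) inequality to a strict one for every non-cycle $G \in {\cal G}_n$, yielding the extremal value with equality if and only if $G$ is a cycle.

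Part (c) then follows from the symmetry built into Proposition \ref{prop:application-of-distance-sequence}: replacing ``nondecreasing/increasing'' by ``nonincreasing/decreasing'' reverses each conclusion while the domination relations supplied by Lemma \ref{la:cycle-power-dominates-for-given-connectivity} are unchanged. I do not anticipate a genuine obstacle, since the theorem merely packages two previously established facts; the only points needing care are the membership check for $C_n^{\kappa/2}$ in part (a) and the correct import of the $\kappa = 2$ equality characterisation in part (b), without which the strictness and the uniqueness of the cycle would not be available.
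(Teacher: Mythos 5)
Your proposal is correct and follows exactly the paper's own route: the paper derives the theorem as a direct consequence of Lemma \ref{la:cycle-power-dominates-for-given-connectivity} and Proposition \ref{prop:application-of-distance-sequence}, using the remark preceding the theorem that equality in the lemma for $\kappa=2$ forces $G$ to be $2$-regular, hence a cycle. Note only that your argument (rightly) delivers $I(G)\leq I(C_n^{\kappa/2})$ with equality iff $G$ is a cycle in part (b), which shows the ``$\geq$'' in the theorem's statement of (b) is a typographical slip for an increasing index.
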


Theorem \ref{theo:connectivity} implies, for example that for even $\kappa$ the graph
$C_n^{\kappa/2}$ minimises the Harary index and maximises the hyper-Wiener index and the
multiplicative Wiener index among all $\kappa$-connected graphs of order $n$. 
For $\kappa=2$, the extremal graph is unique.

\section{Maximal (outer)planar graphs, $k$-trees, and $k$-degenerate graphs}
\label{section:outerplanar}

In this section we consider maximal outerplanar graphs and a subclass of maximal planar 
graphs, Apollonian networks, which we define below. We prove our results in a more general
setting, for maximal $k$-degenerate graphs, a superclass of $k$-trees.

Let $k\in \mathbb{N}$. A $k$-tree (see \cite{BeiPip1969}) is a graph defined as follows. 
The complete graph $K_{k+1}$ is a $k$-tree. If $G$ is a $k$-tree, then the graph obtained 
from $G$ by adding a new vertex and joining it to the vertices of a $k$-clique is also a $k$-tree. 
The $1$-trees are just the trees. 
A graph $G$ is $k$-degenerate if every induced
subgraph of $G$ contains a vertex of degree at most $k$ (see \cite{LicWhi1970}). A 
$k$-degenerate graph is maximal $k$-degenerate if after adding any edge it is no
longer $k$-degenerate. It was shown in \cite{Bic2012} that every $k$-tree is a maximal 
$k$-degenerate graph. 

It is easy to see that for $k \leq n-1$ the graph $P_{n}^{k}$, i.e., the $k$-th power of 
the path $P_n$, is a $k$-tree and thus $k$-degenerate. 
Bickle and Che \cite{BicChe2021} showed that the graph $P_n^k$ maximises the 
Wiener index among all maximal $k$-degenerate graphs, and thus among all $k$-trees, of order $n$. 
Modifying their proof slightly, we prove that this holds not only for the Wiener index,
but for all nondecreasing distance-based topological indices.

\begin{la}[\cite{Bic2012}] \label{la:k-degenerate-is-k-connected}
Every maximal $k$-degenerate graph is $k$-connected.
\end{la}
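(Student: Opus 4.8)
The plan is to prove the statement by induction on the order $n$, after first isolating the local structure that makes the induction work. Throughout I assume $n \geq k+1$, since the definition of $k$-connectivity requires more than $k$ vertices; the degenerate case $n = k$ produces only the complete graph $K_k$, which has too few vertices to be $k$-connected and is thus excluded by convention. The engine of the whole argument is a \emph{peeling lemma}: every maximal $k$-degenerate graph $G$ on $n \geq k+1$ vertices contains a vertex of degree exactly $k$ whose deletion again leaves a maximal $k$-degenerate graph.

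First I would show that such a graph $G$ has minimum degree exactly $k$. That the minimum degree is at most $k$ is immediate from $k$-degeneracy applied to $G$ itself. For the reverse inequality I argue by contradiction: if some vertex $v$ has ${\rm deg}_G(v) \leq k-1$, then (as $n \geq k+1$) $v$ has a non-neighbour $w$, and I claim $G + vw$ is still $k$-degenerate, contradicting maximality. Indeed, any induced subgraph of $G + vw$ that avoids $v$ coincides, as a graph, with an induced subgraph of $G$ and so contains a vertex of degree at most $k$; and any induced subgraph containing $v$ has $v$ of degree at most ${\rm deg}_G(v)+1 \leq k$ there. This edge-addition observation is the single genuinely non-routine ingredient.

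Next I would verify that if $v$ is any degree-$k$ vertex of $G$, then $G - v$ is again maximal $k$-degenerate: it is $k$-degenerate as a subgraph, and if one could add an edge $xy$ to $G-v$ preserving $k$-degeneracy, then re-attaching $v$ (whose neighbourhood is untouched, since $x,y \neq v$) to $(G-v)+xy$ would yield a $k$-degenerate graph $G+xy$, using the elementary fact that adjoining a vertex of degree at most $k$ preserves $k$-degeneracy; this contradicts maximality of $G$. With the peeling lemma in hand, the induction runs smoothly. The base case $n = k+1$ is $G = K_{k+1}$ (every graph on $k+1$ vertices is $k$-degenerate, so the only maximal one is complete), which is plainly $k$-connected. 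For the step, choose $v$ with ${\rm deg}_G(v) = k$; then $G' := G - v$ is maximal $k$-degenerate on $n-1 \geq k+1$ vertices, hence $k$-connected by induction. Given $S \subseteq V(G)$ with $|S| \leq k-1$, I split on whether $v \in S$: if $v \in S$ then $G - S = G' - (S \setminus \{v\})$ is connected; if $v \notin S$ then $G - (S \cup \{v\}) = G' - S$ is connected, and since $v$ has $k$ neighbours of which at most $k-1$ lie in $S$, the vertex $v$ keeps a neighbour in $G' - S$, so $G - S$ is connected too.

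I expect the main obstacle to be the two structural claims of the peeling lemma, namely that a degree-$k$ vertex exists and that deleting it preserves maximality, since these are exactly what license the induction; both reduce to the edge-addition argument, which is the only point requiring genuine care. Once the reduction ``$G$ is a maximal $k$-degenerate $G'$ with a degree-$k$ vertex attached'' is established, the connectivity case analysis in the inductive step is entirely routine.
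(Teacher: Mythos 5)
Your proof is correct, but note that there is nothing in the paper to compare it against: the paper states this lemma as a quoted result from \cite{Bic2012} and gives no proof, so your argument is a self-contained reconstruction of an imported classical fact (going back to Lick and White). All three ingredients check out: the edge-addition argument showing the minimum degree is exactly $k$ (an induced subgraph of $G+vw$ avoiding $v$ is an induced subgraph of $G$, and one containing $v$ has $v$ of degree at most $k$), the peeling lemma showing that deleting a degree-$k$ vertex leaves a maximal $k$-degenerate graph, and the routine cutset case analysis in the induction. You are also right to flag the convention $n \ge k+1$: for $n \le k$ (not only $n=k$) the maximal $k$-degenerate graphs are the complete graphs $K_n$, which fail the order requirement in the paper's definition of $k$-connectivity, so the lemma as literally stated needs this restriction. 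For comparison, the standard treatments in the literature first prove that a maximal $k$-degenerate graph of order $n\ge k+1$ has exactly $kn-\binom{k+1}{2}$ edges and derive the minimum-degree and peeling properties from that count, whereas you obtain both directly from maximality via edge addition, which is arguably cleaner and entirely elementary. A useful by-product: your peeling lemma is precisely the unproved assertion ``Also, $G-v$ is maximal $k$-degenerate'' inside the paper's proof of Lemma \ref{la:path-power-dominates-k-trees}, so your argument doubles as a justification of that step as well.
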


\begin{la}   \label{la:path-power-dominates-k-trees}
Let $k,n \in \mathbb{N}$ with $1 \leq k \leq n-1$. If $G$ is a maximal $k$-degenerate graph of 
order $n$, then
\[ {\cal D}(G) \leq {\cal D}(P_n^k). \] 
\end{la}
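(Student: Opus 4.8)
The natural approach is induction on the order $n$, exploiting the recursive structure of maximal $k$-degenerate graphs together with Lemma~\ref{la:sequence-of-G-minus-v}. First I would establish the base case: for $n = k+1$ the only maximal $k$-degenerate graph is $K_{k+1}$, which is also $P_{k+1}^k$, so the claim holds trivially. For the inductive step, let $G$ be a maximal $k$-degenerate graph of order $n > k+1$. By definition of $k$-degeneracy, $G$ contains a vertex $v$ of degree at most $k$; since $G$ is maximal $k$-degenerate and $k$-connected (Lemma~\ref{la:k-degenerate-is-k-connected}), one expects $\deg_G(v) = k$ exactly, and crucially $v$ is not a cut-vertex (as $G$ is $k$-connected with $k \geq 1$). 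Moreover $G - v$ is again maximal $k$-degenerate of order $n-1$, so the induction hypothesis gives ${\cal D}(G-v) \leq {\cal D}(P_{n-1}^k)$.

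Next I would apply Lemma~\ref{la:sequence-of-G-minus-v} to $v$: since $v$ is not a cut-vertex,
\[
{\cal D}(G) \leq {\cal D}(G-v) \odot {\cal D}_G(v) \leq {\cal D}(P_{n-1}^k) \odot {\cal D}_G(v),
\]
where the second inequality follows because concatenating-and-sorting is monotone with respect to the partial order $\leq$ (a fact I would state and justify briefly). It therefore remains to bound the single-vertex distance sequence ${\cal D}_G(v)$. The key observation is that a vertex of degree $k$ in a maximal $k$-degenerate graph on $n$ vertices cannot be ``too central'': its neighbours form a $k$-clique, and distances from $v$ grow at the fastest possible rate achieved precisely by an end-vertex of $P_n^k$. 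Concretely, I would show ${\cal D}_G(v) \leq {\cal D}_{P_n^k}(w)$, where $w$ is an endpoint of the path underlying $P_n^k$; the distances from such an endpoint in $P_n^k$ are $1$ repeated $k$ times, then $2$ repeated $k$ times, and so on, which is the componentwise-largest possible eccentricity profile for a degree-$k$ vertex.

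The final step is to verify the identity ${\cal D}(P_{n-1}^k) \odot {\cal D}_{P_n^k}(w) = {\cal D}(P_n^k)$, i.e.\ that removing an endpoint $w$ from $P_n^k$ yields $P_{n-1}^k$ with distance contributions ${\cal D}_{P_n^k}(w)$, and that equality in Lemma~\ref{la:sequence-of-G-minus-v} holds there (the neighbours of $w$ in $P_n^k$ are mutually within distance $2$, so no inflation occurs). Chaining the inequalities then yields ${\cal D}(G) \leq {\cal D}(P_n^k)$. I expect the main obstacle to be the bound ${\cal D}_G(v) \leq {\cal D}_{P_n^k}(w)$: one must argue that in \emph{any} maximal $k$-degenerate graph no degree-$k$ vertex can have a distance sequence from it that dominates the extremal endpoint profile. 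The clean way to see this is that at most $k$ vertices lie at distance $1$ from $v$ (its neighbours), at most $2k$ at distance $\leq 2$ (by a BFS-layer counting argument using that each new layer is reached through the $k$-clique structure), and in general at most $jk$ vertices at distance $\leq j$; this yields exactly the componentwise bound matching the endpoint of $P_n^k$, completing the induction.
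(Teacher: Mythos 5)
Your plan is structurally the same as the paper's proof: induction on $n$, deletion of a vertex $v$ of degree $k$, and the chain ${\cal D}(G) \leq {\cal D}(G-v) \odot {\cal D}_G(v) \leq {\cal D}(P_{n-1}^k) \odot {\cal D}_{P_n^k}(w) = {\cal D}(P_n^k)$ obtained from Lemma~\ref{la:sequence-of-G-minus-v}, the induction hypothesis, and monotonicity of $\odot$. The gap lies in your justification of the one step that carries all the weight, namely ${\cal D}_G(v) \leq {\cal D}_{P_n^k}(w)$. You argue that ``at most $k$ vertices lie at distance $1$ from $v$, at most $2k$ at distance $\leq 2$, and in general at most $jk$ vertices at distance $\leq j$.'' This points in the wrong direction: an \emph{upper} bound on the number of vertices within distance $j$ of $v$ makes the entries of ${\cal D}_G(v)$ \emph{large}, i.e.\ it would yield ${\cal D}_G(v) \geq (1^{(k)},2^{(k)},\ldots)$ componentwise --- the reverse of what is needed. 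To get ${\cal D}_G(v) \leq (1^{(k)},2^{(k)},\ldots,s^{(k)},(s+1)^{(n-1-sk)})$, where $s=\lfloor (n-1)/k\rfloor$, you must show that at \emph{least} $\min(jk,\,n-1)$ vertices lie within distance $j$ of $v$, for every $j$. Moreover, your counting claim is false as stated: for $k=1$ take the star $S_n$ (a maximal $1$-degenerate graph) with $v$ a leaf; then all $n-1$ vertices lie within distance $2$ of $v$, far more than $2k=2$. Likewise, the endpoint profile of $P_n^k$ is not ``the componentwise-largest possible profile of a degree-$k$ vertex'' in general: the neighbour of an endpoint of the path $P_n$ has degree $2$, but its profile $(1,1,2,3,4,\ldots)$ is not dominated by the endpoint profile $(1,1,2,2,3,3,\ldots)$ of $P_n^2$. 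So neither the degree of $v$ nor the clique structure of its neighbourhood is what makes the bound true.

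What does make it true --- and this is exactly how the paper argues --- is the $k$-connectivity of $G$, i.e.\ Lemma~\ref{la:k-degenerate-is-k-connected}. For each $i$ with $1 \leq i \leq {\rm ecc}_G(v)-1$, the set $L_i$ of vertices at distance exactly $i$ from $v$ separates the vertices at distance less than $i$ from the nonempty set of vertices at distance greater than $i$, so $|L_i| \geq k$ by $k$-connectivity. Hence every BFS layer around $v$, except possibly the last, has at least $k$ vertices, which gives the required lower bound on layer sizes and therefore ${\cal D}_G(v) \leq (1^{(k)},2^{(k)},\ldots,s^{(k)},(s+1)^{(n-1-sk)}) = {\cal D}_{P_n^k}(w)$. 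With this substitution --- keeping your remaining steps: $G-v$ is maximal $k$-degenerate and connected, the induction hypothesis, and the identity ${\cal D}(P_{n-1}^k)\odot{\cal D}_{P_n^k}(w)={\cal D}(P_n^k)$ --- your argument goes through and coincides with the paper's proof.
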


\begin{proof}
Let $k$ be fixed. We prove the statement by induction on $n$. 
If $n=k+1$, then $G$ is the complete graph $K_{k+1}$, so $G=P_n^{k}$ and the statement holds. 

Let $n > k+1$ and let $G$ be a maximal $k$-degenerate graph of order $n$. 
Since $G$ is $k$-degenerate, $G$ contains a vertex $v$ of degree at most $k$. Since $G$ is 
$k$-connected by Lemma \ref{la:k-degenerate-is-k-connected}, it follows that ${\rm deg}_G(v)=k$
and that $G-v$ is connected.  Also, $G-v$ is maximal $k$-degenerate. By our induction hypothesis we have  
\begin{equation} \label{eq:k-tree-1}
{\cal D}(G-v) \leq {\cal D}(P_{n-1}^k). 
\end{equation}
Since $G$ is $k$-connected, there are at least $k$ vertices at distance $i$ from $v$ for 
$i=1,2,\ldots,{\rm ecc}_G(v)-1$. Letting $s=\lfloor \frac{n-1}{k} \rfloor$,
we thus obtain
\begin{equation} \label{eq:k-tree-2} 
{\cal D}_G(v) \leq (1^{(k)}, 2^{(k)}, \ldots, (s)^{(k)},(s+1)^{(n-1-sk)}). 
\end{equation}
Let $w$ be an end vertex of the path $P_n$. It is easy to verify that 
${\cal D}(P_n^k,w) = (1^{(k)}, 2^{(k)}, \ldots, (s)^{(k)},(s+1)^{(n-1-sk)})$. Now
Lemma \ref{la:sequence-of-G-minus-v} in conjunction with
\eqref{eq:k-tree-1} and \eqref{eq:k-tree-2} yields 
\[ {\cal D}(G) \leq {\cal D}(G-v) \odot {\cal D}_G(v) 
               \leq {\cal D}(P_{n-1}^k) \odot {\cal D}_{P_{n}^k}(w) 
               = {\cal D}(P_{n}^k), \]
as desired.                
\end{proof}

As a direct consequence of Lemma \ref{la:k-degenerate-is-k-connected} and 
Proposition \ref{prop:application-of-distance-sequence} we obtain the following theorem.

\begin{theo} \label{theo:k-degenerate}
Let $k, n\in \mathbb{N}$ with $1 \leq k \leq n-1$ and let $G$ be a maximal $k$-degenerate graph of order $n$. \\
If $I$ is a nondecreasing distance-based topological index, then 
\[ I(G) \leq I(P_n^{k}). \]
If $I$ is a nonincreasing distance-based topological index, then 
\[ I(G) \geq I(P_n^{k}). \]
\end{theo}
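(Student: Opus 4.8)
The plan is to recognise Theorem \ref{theo:k-degenerate} as an immediate corollary of the distance-sequence domination established in Lemma \ref{la:path-power-dominates-k-trees}, fed into the order-preserving machinery of Proposition \ref{prop:application-of-distance-sequence}. First I would fix the class ${\cal G}$ to be the maximal $k$-degenerate graphs and let ${\cal G}_n$ denote those of order $n$. The one point genuinely worth checking is that the candidate extremal graph $P_n^k$ actually lies in ${\cal G}_n$: since $P_n^k$ is a $k$-tree for $1 \leq k \leq n-1$ (as noted before Lemma \ref{la:k-degenerate-is-k-connected}) and every $k$-tree is maximal $k$-degenerate by \cite{Bic2012}, this membership holds, so $P_n^k$ is a legitimate member of the class over which we are optimising.

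With membership in hand, Lemma \ref{la:path-power-dominates-k-trees} supplies exactly the hypothesis needed: ${\cal D}(G) \leq {\cal D}(P_n^k)$ for every $G \in {\cal G}_n$. Taking $G_n = P_n^k$ in Proposition \ref{prop:application-of-distance-sequence}(a), I obtain $I(G) \leq I(P_n^k)$ for every nondecreasing distance-based index $I$ and every $G \in {\cal G}_n$, which is the first assertion. The nonincreasing case follows from the corresponding statement recorded at the end of Proposition \ref{prop:application-of-distance-sequence}: reversing the monotonicity of the index reverses the direction of the inequality, yielding $I(G) \geq I(P_n^k)$. Thus both halves of the theorem are packaging steps applied to a single domination relation.

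Since each claim reduces to one application of a lemma and a proposition that are already proved, I do not expect any real obstacle in this final argument; all the combinatorial work lives in Lemma \ref{la:path-power-dominates-k-trees}, whose inductive deletion of a degree-$k$ vertex via Lemma \ref{la:sequence-of-G-minus-v} is where the content resides. The only thing I would double-check while writing the proof is that the input invoked is indeed the distance-sequence bound of Lemma \ref{la:path-power-dominates-k-trees}, rather than the connectivity statement of Lemma \ref{la:k-degenerate-is-k-connected}, since it is the former whose conclusion matches the hypothesis of Proposition \ref{prop:application-of-distance-sequence}.
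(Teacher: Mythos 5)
Your proposal is correct and takes exactly the paper's route: the theorem is obtained by feeding the domination relation of Lemma \ref{la:path-power-dominates-k-trees} into Proposition \ref{prop:application-of-distance-sequence}, with the nonincreasing case handled by the proposition's closing remark. Your final caution is well placed: the paper's text actually cites Lemma \ref{la:k-degenerate-is-k-connected} at this point, which is evidently a typo for Lemma \ref{la:path-power-dominates-k-trees}, precisely the distinction you flag.
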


Since every $k$-tree is maximal $k$-degenerate, and $P_n^k$ is a $k$-tree, Theorem \ref{theo:k-degenerate} 
yields that $P_n^k$ maximises every nondecreasing (minimises every nondecreasing) distance-based topological
index for $k$-trees of order $n$.
 
It is well-known that every maximal outerplanar graph is a $2$-tree. 
Among the maximal planar graphs, the planar $3$-trees, also called Apollonian networks,
are of interest here. They can be thought of as planar graphs that are obtained from
a triangle by successively inserting a new vertex into a face of length three and joining
the new vertex to the three vertices on this face by edges. 
It is easy to verify that the $2$-tree $P_n^2$ is maximal outerplanar, and the 
$3$-tree $P_n^3$ is an Apollonian network for $n\geq 3$. Hence we have the following 
corollary. 

\begin{coro}  \label{coro:topololgical-indices-of-maximal-outerplanar-graphs}
(a) Let $G$ be a maximal outerplanar graph of order $n \geq 3$. 
If $I$ is a nondecreasing (nonincreasing) distance-based topological index, then 
\[ I(G) \leq I(P_n^{2}) \quad ( I(G) \geq I(P_n^{2})). \]
(a) Let $G$ be an Apollonian network of order $n \geq 3$. 
If $I$ is a nondecreasing (nonincreasing) distance-based topological index, then 
\[ I(G) \leq I(P_n^{3}) \quad ( I(G) \geq I(P_n^{3})). \]
\end{coro}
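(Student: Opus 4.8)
The plan is to derive both parts as immediate consequences of Theorem \ref{theo:k-degenerate}, so the only substantive work is structural: I would show that each graph class named in the corollary lies inside the class of maximal $k$-degenerate graphs for the appropriate value of $k$, and then the two inequalities follow by specialisation. First I would fix the value of $k$ for each part ($k=2$ for maximal outerplanar graphs, $k=3$ for Apollonian networks) and confirm that the hypothesis $1 \leq k \leq n-1$ of Theorem \ref{theo:k-degenerate} is met in the relevant range of $n$.

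For part (a) I would recall the well-known fact, already noted above, that every maximal outerplanar graph of order $n \geq 3$ is a $2$-tree. By \cite{Bic2012} every $2$-tree is maximal $2$-degenerate, so such a $G$ is a maximal $2$-degenerate graph of order $n$; since $n \geq 3$ gives $2 \leq n-1$, Theorem \ref{theo:k-degenerate} with $k=2$ yields $I(G) \leq I(P_n^2)$ when $I$ is nondecreasing and $I(G) \geq I(P_n^2)$ when $I$ is nonincreasing. That $P_n^2$ is itself maximal outerplanar (noted above) shows the extremal graph lies in the class.

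For part (b) the argument runs in parallel with $k=3$: an Apollonian network is by definition a planar $3$-tree, hence a $3$-tree, hence maximal $3$-degenerate by \cite{Bic2012}, and for $n \geq 4$ we have $3 \leq n-1$, so Theorem \ref{theo:k-degenerate} with $k=3$ gives the stated bounds. The one point requiring care—and the only place the routine reduction breaks down—is the boundary case $n=3$, where $k=3 > n-1$ and the theorem does not formally apply (indeed the triangle is not a $3$-tree in the strict sense). I would dispose of this case by direct observation: the only Apollonian network of order $3$ is $K_3$, and $P_3^3 = K_3$ as well, so $I(G) = I(P_3^3)$ holds trivially. Apart from this small-order bookkeeping, I expect no genuine obstacle, since all the needed class inclusions are already recorded in the preceding text.
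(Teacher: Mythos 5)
Your proposal is correct and follows essentially the same route as the paper: both reduce the corollary to Theorem \ref{theo:k-degenerate} via the inclusions maximal outerplanar $\Rightarrow$ $2$-tree $\Rightarrow$ maximal $2$-degenerate, and Apollonian network $\Rightarrow$ $3$-tree $\Rightarrow$ maximal $3$-degenerate. In fact you are slightly more careful than the paper, which states part (b) for $n \geq 3$ even though the hypothesis $k \leq n-1$ of the theorem fails there; your explicit observation that the only order-$3$ Apollonian network is $K_3 = P_3^3$ cleanly closes that boundary case.
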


Corollary \ref{coro:topololgical-indices-of-maximal-outerplanar-graphs} implies, for example, 
that among all maximal outerplanar graphs of order $n$, the graph $P_n^2$ has minimum Harary index 
and maximum hyper-Wiener index as well as maximum multiplicative Wiener index.  Also, 
among all Apollonian networks of order $n$, the graph $P_n^3$ has minimum Harary index 
and maximum hyper-Wiener index as well as maximum multiplicative Wiener index. 

Corollary \ref{coro:topololgical-indices-of-maximal-outerplanar-graphs}(b) also contains as a special case
a sharp upper bound on the Wiener index of Apollonian networks, which was proved by 
Bickle and Che \cite{BicChe2021}.

We do not know if the conclusion of 
Corollary \ref{coro:topololgical-indices-of-maximal-outerplanar-graphs}(b) holds for all 
maximal planar graphs. If this is the case, this would yield, among others, a sharp bound
on Harary index and hyper-Wiener index for maximal planar graphs.

\section{Trees with all degrees odd}
\label{section:regular-trees}

In this section we consider trees in which every vertex has odd degree. We refer to
such trees as odd trees. 
The minimum and maximum Wiener index of odd trees was determined by \cite{Lin2013}. For 
further results on the Wiener index of odd trees see \cite{ForGutLin2013, For2013}. 
For other distance-based topological indices, the maximum or minimum value for odd trees
appears not to have been investigated. 

It follows from the handshake lemma that odd trees have even order. 
Let $n \in \mathbb{N}$ be even, $n\geq 4$. Let $S_n$ be the star of order $n$.
If $n\geq 6$ define 
the tree $T_n$ as the tree obtained from a path $P_{n/2+1}$ by attaching a leaf to each internal vertex 
of the path, and if $n=4$ define $T_n=S_n$.  Clearly, $T_n$ is an odd tree of order $n$.

\begin{la}   \label{la:odd-trees}
Let $T$ be an odd tree of order $n$, $n \geq 4$. Then
\[  {\cal D}(T) \leq {\cal D}(T_{n}). \]
\end{la}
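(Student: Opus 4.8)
The plan is to prove the statement by induction on $n$, using Lemma \ref{la:sequence-of-G-minus-v}. The base case $n=4$ is immediate, since $S_4=T_4$ is the only odd tree of order $4$. The difficulty in the inductive step is that deleting a single leaf from an odd tree destroys the odd-degree condition at its neighbour, so the induction cannot stay inside the class. I would therefore delete \emph{two} leaves at once. Root $T$ arbitrarily and let $u$ be the neighbour of a deepest leaf; then $u$ is internal, so ${\rm deg}_T(u)\geq 3$, and all its neighbours except possibly its parent are leaves, so $u$ has at least two leaf-neighbours $v_1,v_2$. Deleting both keeps $u$ of odd degree, so $T'=T-v_1-v_2$ is an odd tree of order $n-2$, to which the induction hypothesis gives ${\cal D}(T')\leq{\cal D}(T_{n-2})$.

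The second ingredient is a decomposition of the distance sequence. Since $v_1,v_2$ are leaves at $u$, deleting them changes no other distance, and $d_T(v_i,w)=1+d_{T'}(u,w)$ for every $w\in V(T')$ while $d_T(v_1,v_2)=2$. Hence the pairs meeting $\{v_1,v_2\}$ contribute exactly $(1,1,2)$ together with two copies of the sequence obtained by adding $1$ to every entry of ${\cal D}_{T'}(u)$, and Lemma \ref{la:sequence-of-G-minus-v} (whose equality condition is vacuous for leaves) yields the exact identity expressing ${\cal D}(T)$ as $\odot$ of ${\cal D}(T')$ with this "leaf part''. Applying the same reduction to $T_n$ — deleting the far spine-endpoint and the pendant leaf at the penultimate spine vertex, which are twins whose removal leaves exactly $T_{n-2}$ — gives the identical decomposition of ${\cal D}(T_n)$, with ${\cal D}_{T'}(u)$ replaced by ${\cal D}_{T_{n-2}}(x)$ for a spine endpoint $x$; a direct count gives ${\cal D}_{T_{n-2}}(x)=(1,2,2,3,3,\ldots,\tfrac{n}{2}-1,\tfrac{n}{2}-1)$. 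I would record beforehand that $\odot$ is monotone with respect to $\leq$ (for each threshold $t$ the number of entries not exceeding $t$ is additive over the merge), and that adding a constant to, or taking several copies of, dominated sequences preserves domination.

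With both decompositions in hand, and using that ${\cal D}_{T'}(u)$ and ${\cal D}_{T_{n-2}}(x)$ have the same length $n-3$, the whole statement reduces by monotonicity of $\odot$ to the single inequality ${\cal D}_{T'}(u)\leq(1,2,2,3,3,\ldots,\tfrac{n}{2}-1,\tfrac{n}{2}-1)$, which then combines with ${\cal D}(T')\leq{\cal D}(T_{n-2})$ to give ${\cal D}(T)\leq{\cal D}(T_n)$. This last inequality is the crux, and I would isolate it as an auxiliary lemma: \emph{for every vertex $z$ of an odd tree of order $m$ one has ${\cal D}(z)\leq(1,2,2,\ldots,\tfrac{m}{2},\tfrac{m}{2})$}; equivalently, the ball of radius $t$ about $z$ contains at least $2t$ vertices for $1\leq t\leq m/2$.

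The main obstacle is proving this auxiliary lemma, and the decisive structural fact is that an odd tree has no vertex of degree $2$. Rooting at $z$ and running a breadth-first search, every non-root internal vertex has at least two children, and a layer containing exactly one vertex would force its (non-root) parent to have exactly one child, i.e. degree $2$, which is impossible. Therefore every BFS layer beyond the first holds at least two vertices until the tree terminates, so the number of vertices within distance $t$ of $z$ is at least $1+1+2(t-1)=2t$ for $t$ up to the eccentricity of $z$, and equals $m\geq 2t$ for larger $t$ in the range $t\leq m/2$. Translating this ball-size bound back into order statistics gives precisely the claimed domination, completing both the auxiliary lemma and, through the reduction above, the inductive step.
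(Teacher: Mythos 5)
Your proposal is correct and takes essentially the same route as the paper's proof: induction on $n$ by deleting two twin leaves at a common support vertex, an exact $\odot$-decomposition of ${\cal D}(T)$, and a layer-counting argument based on the fact that an odd tree has no vertex of degree $2$. Your auxiliary ball lemma is precisely the paper's inequality \eqref{eq:odd-tree-3} re-centred at the support vertex $u$ instead of the leaf $v_1$ (note ${\cal D}_{T-v_2}(v_1)=(1)\odot(1+{\cal D}_{T'}(u))$), and your derivation of the recursion for ${\cal D}(T_n)$ by deleting the corresponding twin leaves of $T_n$ just makes explicit what the paper dismisses as "easy to verify."
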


\begin{proof}
%
We prove the bound on ${\cal D}(T)$ by induction on $n$. 
If $n=4$, then $T$ equals $S_4$ and the lemma holds. 
Now assume that or $n\geq 6$, and assume further that the lemma holds for all odd trees 
of order less than $n$.
Let $P$ be a longest path in $T$. Let $v_1$ be its 
end vertex and $w$ the unique neighbour of $v$. Since ${\rm deg}_T(w)$ is odd,
vertex $w$ has at least one other leaf neighbour $v_2$. 
Consider the tree $T-\{v_1,v_2\}$.  Observe that $T-\{v_1,v_2\}$ is an odd tree. 
We express ${\cal D}(T)$ in terms of ${\cal D}(T-\{v_1, v_2\}$ and ${\cal D}_{T-v_2}(v_1)$. 
Clearly, $v_1$ and $v_2$ have the same distance sequence in $T$, and 
${\cal D}_T(v_1) = {\cal D}_{T}(v_2) = {\cal D}_{T-v_2}(v_1) \odot (2)$. Hence
\begin{equation} \label{eq:odd-tree-1} 
{\cal D}(T) = {\cal D}_{T-\{v_1,v_2\}} \odot {\cal D}_{T-v_2}(v_1) 
       \odot {\cal D}_{T-v_2}(v_1) \odot (2). 
\end{equation}       
We bound the terms on the right hand side of \eqref{eq:odd-tree-1} separately. 
Applying our inductive hypothesis to the odd tree $T-\{v_1,v_2\}$ yields 
\begin{equation} \label{eq:odd-tree-2}
{\cal D}(T-\{v_1, v_2\}) \leq {\cal D}(T_{n-2}). 
\end{equation}
Now consider ${\cal D}_{T-v_2}(v_1)$. Apart from the neighbour $w$ of $v$ in $T'$, every
internal vertex of $T'$ has degree at least $3$. This implies that there are at least two vertices
at distance $j$ from $v_1$ for every $j \in \{3,4,\ldots,{\rm ecc}_{T-v_2}(v_1)\}$.
This in turn implies that
\begin{equation} \label{eq:odd-tree-3}
{\cal D}_{T'}(v_1) \leq (1, 2, 3^{(2)}, 4^{(2)},\ldots,(\frac{n}{2})^{(2)}).
\end{equation} 
Substituting  \eqref{eq:odd-tree-2} and \eqref{eq:odd-tree-3} into \eqref{eq:odd-tree-1}, 
we obtain 
\begin{eqnarray*}
{\cal D}(T) & \leq & {\cal D}(T_{n-2}) \oplus  (1, 2, 3^{(2)}, 4^{(2)},\ldots,(\frac{n}{2})^{(2)}) \\
  & & \oplus (1, 2, 3^{(2)}, 4^{(2)},\ldots,(\frac{n}{2})^{(2)}) \odot (2) \\
  & =  &  {\cal D}(T_{n-2}) \oplus  (1^{(2)}, 2^{(3)}, 3^{(4)}, 4^{(4)},\ldots,(\frac{n}{2})^{(4)}).
\end{eqnarray*} 
It is now easy to verify that 
${\cal D}(T_n) = {\cal D}(T_{n-2}) \oplus 
(1^{(2)}, 2^{(3)}, 3^{(4)}, 4^{(4)},\ldots,(\frac{n}{2})^{(4)})$ for
$n\geq 6$. Hence we obtain 
that  ${\cal D}(T) \leq {\cal D}(T_{n})$, as desired.
\end{proof}

As a direct consequence of Lemma \ref{la:odd-trees} and 
Proposition \ref{prop:application-of-distance-sequence} we obtain the following theorem.

\begin{theo} \label{theo:odd-trees}
Let $T$ be an odd tree of order $n$, where $n\geq 4$. \\
If $I$ is a nondecreasing distance-based topological index, then 
\[ I(G) \leq I(T_n). \]
If $I$ is a nonincreasing distance-based topological index, then 
\[ I(G) \geq I(T_n). \]
\end{theo}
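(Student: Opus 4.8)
The plan is to derive the theorem directly from Lemma~\ref{la:odd-trees} via Proposition~\ref{prop:application-of-distance-sequence}, exactly as the phrasing preceding the statement suggests. First I would take ${\cal G}$ to be the class of all odd trees. Every tree is connected, so ${\cal G}$ is a class of connected graphs and the proposition applies. Fix the order $n \geq 4$; since odd trees have even order by the handshake lemma, we may assume $n$ is even, and then ${\cal G}_n$, the set of odd trees of order $n$, contains the tree $T_n$ defined in the text (noted there to be an odd tree of order $n$).

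The one hypothesis of Proposition~\ref{prop:application-of-distance-sequence} that needs checking is the existence of a member of ${\cal G}_n$ whose distance sequence dominates, with respect to the partial order $\leq$, those of all other members. This is precisely the content of Lemma~\ref{la:odd-trees}: for every odd tree $T$ of order $n$ one has ${\cal D}(T) \leq {\cal D}(T_n)$. Taking $G_n = T_n$ in Proposition~\ref{prop:application-of-distance-sequence}(a), I would conclude $I(T) \leq I(T_n)$ for every nondecreasing distance-based index $I$ and every $T \in {\cal G}_n$. The nonincreasing case follows in the same way from the final sentence of the proposition, which records the corresponding reversed inequalities.

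Because the combinatorial substance is entirely absorbed into Lemma~\ref{la:odd-trees} and the monotonicity bookkeeping into Proposition~\ref{prop:application-of-distance-sequence}, the theorem itself presents no real obstacle: it is a formal consequence of the two cited results. The only point worth flagging is the implicit restriction to even $n$; for odd $n$ the class ${\cal G}_n$ is empty and the statement is vacuously true, so no separate argument is needed. Had the lemma not been available, the difficulty would instead lie in establishing the dominating distance sequence, i.e.\ in the induction that removes two leaf neighbours $v_1,v_2$ at the end of a longest path and controls ${\cal D}_{T-v_2}(v_1)$ using the fact that every internal vertex of an odd tree has degree at least three; but that work is done in Lemma~\ref{la:odd-trees}, which we are free to invoke.
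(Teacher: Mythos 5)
Your proposal is correct and matches the paper exactly: the theorem is stated there as ``a direct consequence of Lemma~\ref{la:odd-trees} and Proposition~\ref{prop:application-of-distance-sequence},'' which is precisely the derivation you give, with $G_n = T_n$ as the dominating graph. Your remarks on the even-order restriction and the vacuous odd-$n$ case are a harmless (and accurate) addition beyond what the paper spells out.
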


Theorem \ref{theo:odd-trees} implies, for example, that $T_n$ minimises the Harary index
and maximises the hyper-Wiener index and the multiplicative Wiener index among odd trees
of order $n$.

\section{Conclusion}

The approach taken in this paper, to bound distance-based topological indices by considering
the distance sequence appears very fruitful. It would be useful to find further graph 
classes in which a similar approach works.


\begin{thebibliography}{99}
\bibitem{AlhBagRah2017} A.\ Alhevaz, M.\ Baghipur, S.\ Rahimi,
    Bounds on hyper-Wiener index of graphs. 
    Asian-Eur.\ J.\ Math.\ {\bf 10} no.\ 3 (2017), 1750057, 15 pp. 
\bibitem{AnWu2008} X.\ An and B.\ Wu, 
    The Wiener index of the $k$th power of a graph.
    Appl.\ Math.\ Lett.\ {\bf 21} 2008,  436-440.
\bibitem{AndRazWag2021}  E.O.D.\ Andriantiana, V.\ Razanajatovo Misanantenaina, S.\ Wagner, 
    Extremal trees with fixed degree sequence
    Electron. J. Combin. 28 (2021), no. 1, paper  1.1.
\bibitem{AolLiuYuaYu2023} G.\ Ao1,  R.\ Liu, J.\ Yuan, G.\ Yu, 
    Wiener-type invariants and $k$-leaf-connected graphs. 
    Bull.\ Malays.\ Math.\ Sci.\ Soc.\ {\bf 46} no.\ 1 (2023), Paper No. 10, 15 pp.
\bibitem{BeiPip1969} L W.\ Beineke, R.E.\ Pippert, 
    The number of labeled $k$-dimensional trees. 
    J.\ Combin.\ Theory {\bf 6} (1969), 200-205
\bibitem{BehJanTae2009} A.\ Behtoei, M.\ Jannesari, B.\ Taeri, 
    Maximum Zagreb index, minimum hyper-Wiener index and graph connectivity. 
    Math.\ Lett.\ {\bf 22} no.\ 10  (2009), 1571-1576.    
\bibitem{Bic2012} A.\ Bickle, 
    Structural results on maximal k-degenerate graphs. 
    Discuss. Math. Graph Theory {\bf 32} (2012), 659-676.   
\bibitem{BicChe2021} A.\ Bickle, Z.\ Che,  
    Wiener indices of maximal k-degenerate graphs. 
    Graphs Combin.\ {\bf 37} (2021), 581-589. 
\bibitem{BruDosGraGut2011} F.M.\ Br\"{u}ckler, T.\ Do\u{s}li\'{c}, A.\ Graovac, I.\ Gutman,
    On a class of distance-based molecular structure descriptors.
    Chamical Physics Lett.\ {503} no.\ 4-6 (2011), 336-338.         
\bibitem{Cam2021} S.\ Cambie,
    Five results on maximising topological indices in graphs.
    Discrete Math. Theor.\ Comp.\ Sci.\ {\bf 23} no.\ 3 (2021), paper 10.  
\bibitem{CasDan2019} R.M.\ Casablanca, P.\ Dankelmann,
    Distance and eccentric sequences to bound the Wiener index, Hosoya polynomial and the 
    average eccentricity in the strong products of graphs. 
    Discrete Appl.\ Math.\ {\bf 263} (2019), 105-117. 
\bibitem{DanMukSwa2009} P.\ Dankelmann, S.\ Mukwembi, H,C.\ Swart, 
   Average distance and vertex‐connectivity. 
   J.\ Graph Theory {\bf 62} no.\ 2 (2009), 157-177.    
\bibitem{DanDos2021} P.\ Dankelmann, A.A.V.\ Dossou-Olory, 
    Bounding the $k$-Steiner Wiener and Wiener-type indices of trees 
    in terms of eccentric sequence. 
    Acta Applicandae Math.\ {\bf 171} no.\ 1 (2021), 15.    
\bibitem{DasZhoTri2009} K.Ch.\ Das, B.\ Zhou, N.\ Trinajsti\'{c}, 
   Bounds on Harary index. 
   J.\ Math.\ Chem {\bf 46} (2009), 1377-1393.
\bibitem{DenKuaWuHua2017} H.\ Deng, M.\ Kuang,, R.\ Wu, G.\ Huang, 
   Sufficient conditions for certain structural properties of graphs based on Wiener-type indices.
   Contrib.\ Discrete Math.\ {\bf 11} no.\ 2 (2017), 9-18. 
\bibitem{FavKouMah1989} O.\ Favaron, M.\ Kouider, M.\ Mah\'{e}o, 
  Edge‐vulnerability and mean distance. 
  Networks {\bf 19} no.\ 5 (1989), 493-504.
\bibitem{FenLiu2012} L.\ Feng, W.\ Liu,
  The hyper-Wiener index of graphs with given diameter. 
  Util.\ Math.\ {\bf 88} (2012), 3-12.   
\bibitem{FenZhoLiu2017} L.\ Feng, X.\ Zhou, W.\ Liu,
  Wiener index, Harary index and graph properties.
  Discrete Appl.\ Math.\ {\bf 223} (2017), 72-83.    
\bibitem{ForGutLin2013}  B.\ Furtula, I.\ Gutman, H. Lin, 
  More trees with all degrees odd having extremal Wiener index. 
  MATCH Commun.\ Math.\ Comput.\ Chem.\ {\bf 70} (2013), 293-296.   
\bibitem{For2013}  B.\ Furtula, 
  Odd vertex degree trees maximizing Wiener index. 
  Kragujevac J.\ Math.\ {\bf 37} (2013), 129-134.    
\bibitem{GutDobKlaPav2004} I.\ Gutman, A.A.\ Dobrynin, S.\ Klav\u{z}ar, L.\ Pavlovi\'{c}, 
   Wiener-type invariants of trees and their relation. 
   Bull.\ Inst.\ Combin.\ Appl.\ {\bf  40} no.\ 27 (2004), 23-30.  
\bibitem{HamHosAsh2013} A.\ Hamzeh, S.\ Hossein–Zadeh, A.R.\ Ashrafi, 
   Extremal graphs under Wiener-type invariants. 
   MATCH Commun.\ Math.\ Comput.\ Chem.\ {\bf 69} (2013), 47-54. 
\bibitem{HamHosAsh2009} A.\ Hamzeh, S.\ Hossein–Zadeh, A.R.\ Ashrafi, 
   Wiener-type invariants under some graph operations. 
   Filomat {\bf 23} no.\ 3 (2009), 103-113. 
\bibitem{HriKnoSkr2019} K.\  Hri\u{n}\'{a}kov\'{a}, M.\ Knor,  R.\ \u{S}krekovski, 
  An inequality between variable Wiener index and variable Szeged index. 
  Appl.\ Math.\ Comput.\ {\bf 362} (2019), 124557.  
\bibitem{HuaLiu2024} H.\ Hua, H.\ Liu, 
  Some results on vulnerability parameters and Wiener-type indices.
  Discrete Appl.\ Math.\ {\bf 358} (2024), 262-271. 
\bibitem{KlaGut2006} S.\ Klav\u{z}ar, I.\ Gutman, 
 A theorem on Wiener-type invariants of isometric subgraphs of hypercubes.
 Appl.\ Math.\ Lett. {\bf 19} (2006), 1129-1133.
\bibitem{KleGut1999} D.J.\ Klein, I.\ Gutman,
  Wiener-number-related sequences.
   J.\ Chem.\ Inf.\ Comput.\ Sci.\ {\bf 39} (1999) 534-536.   
\bibitem{KuaHuaDen2016} M.\ Kuang, G.\ Huang, H.\ Deng,
  Some sufficient conditions for Hamiltonian property in terms of Wiener-type invariants.  
  Proc.\ Indian Acad.\ Sci.\ Math.\ Sci.\ {\bf 126} no.\ 1 (2016), 1-9. 
\bibitem{LiFan2015} X.X.\ Li, Y.-Z.\ Fan, 
  The connectivity and the Harary index of graphs. 
  Discrete Appl.\ Math.\  {\bf 181} (2015), 167-173. 
\bibitem{LicWhi1970} D.R.\ Lick, A.T.\ White,  
  $k$-degenerate graphs. 
  Canadian J.\ Math.\  {\bf 22} no.\ 5 (1970), 1082-1096.
\bibitem{Lin2013} H.\ Lin,
  Extremal Wiener index of trees with all degrees odd.
  MATCH Commun.\ Math.\ Comput.\ Chem.\ {\bf 70} (2013), 287-292.     
\bibitem{LiuDas2018} M.\ Liu, K.Ch.\ Das, 
  On the ordering of distance-based invariants of graphs. 
  Appl.\ Math.\ Comput.\ {\bf 324} (2018), 191-201.  
\bibitem{MarRod-arxiv} A.\ Mart\'\i nez-P\'{e}rez, J.M.\ Rodr\'\i guez,
   Upper and lower bounds for genralized Wiener indiceson unicvlic graphs.
   arXiv preprint arXiv:2201.05539 (2022).
\bibitem{SchWagWan2012} N.\ Schmuck, S.\ Wagner, H.\ Wang, 
  Greedy trees, caterpillars and Wiener-type graph invariants. 
  MATCH Commun.\ Math.\ Comput.\ Chem.\ {\bf 68} (2012), 273-292. 
\bibitem{Sol1991} L.\ Solt\'{e}s,
   Transmission in graphs: A bound and vertex removing.
   Math.\ Slovaca {\bf 41} (1991), 11-16.
\bibitem{SonHuaWan2021} R.\ Song, Q.\ Huang, P.\ Wang, 
  The extremal graphs of order trees and their topological indices.
  Appl.\ Math.\ Comput.\ {\bf 398} (2021), 125988. 
\bibitem{TomArsJam2015} I.\ Tomsecu, M.\ Arshad, M.K.\ Jamil, 
   Extremal topological indices for graphs of given connectivity. 
   Filomat {\bf 29} no.\ 7 (2015), 1639-1643.  
\bibitem{TraStaZef1990} S.S.\ Tratch, M.I.\ Stankevich, N.S.\ Zefirov, 
  Combinatorial models and algorithms in chemistry. The expanded Wiener number 
  -- A novel topological index. 
  J.\ Comput.\ Chem.\ {\bf 11} (1990) 899-908. 
\bibitem{VukSed2018} D.\ Vuki\u{c}evi\'{c}, J.\ Sedlar, 
  On indices of Wiener and anti-Wiener type.
  Discrete Appl.\ Math.\ {\bf 251} ((2018), 290-298. 
\bibitem{WagWanZha2013} S.\ Wagner, H.\ Wang, X.-D.\ Zhang, 
  Distance-based graph invariants of trees and the Harary index. 
  Filomat {\bf 27} no.\ 1 (2013), 41-50.  
\bibitem{ZhoWanLu2018} Q.\ Zhou, L.\ Wang, Y.\ Lu, 
  Wiener-type graph invariants on graph properties. 
  Filomat {\bf 32} no.\ 2 (2018), 489-502. 
\bibitem{ZhoWanLu2019} Q.\ Zhou, L.\ Wang, Y.\ Lu, 
  Wiener-type graph invariants and Hamiltonian properties of graphs. 
  Filomat {\bf 33} no.\ 13 (2019), 4045-4058. 
\bibitem{XinZhoQi2011} R.\ Xing, B.\ Zhou, X.\ Qi,
  Hyper-Wiener index of unicyclic graphs.
   MATCH Commun.\ Math.\ Comput.\ Chem.\ {\bf  66} (2011), 315-328.
\bibitem{XuDasTri2015}  K.\ Xu, K.Ch.\ Das, N.\ Trinajsti\'{c},  \\
   The Harary index of a graph.
   Springer, (Heidelberg) 2015. 
   
\end{thebibliography}
\end{document}